\documentclass[12pt]{amsart}
\usepackage{amssymb,amscd}
\usepackage[active]{srcltx}
\usepackage[all]{xy}
\title [ canonical class inequality]{  Canonical
 Class Inequality for Fibred Spaces}
\author[Jun Lu]{Jun Lu}
\author[Sheng-Li Tan]{Sheng-Li Tan}
\author[Kang Zuo]{Kang Zuo}
\address{Department of Mathematics, East China Normal University,   Dongchuan RD 500,
 Shanghai 200241, P. R. of China}
 \email{jlu@math.ecnu.edu.cn  }
 \address{Department of Mathematics, East China Normal University, Dongchuan RD 500,
 Shanghai 200241, P. R. of China}
 \email{sltan@math.ecnu.edu.cn}
\address{Universit\"{a}t Mainz, Fachbereich 08-Physik Mathematik und Informatik, 55099 Mainz, Germany}
\email{zuok@uni-mainz.de}
\thanks{}
\setlength{\headheight}{8pt} \setlength{\textheight}{23.5cm}
\setlength{\textwidth}{14.8cm} \setlength{\oddsidemargin}{.7cm}
\setlength{\evensidemargin}{.7cm} \setlength{\topmargin}{-.2cm}\fussy\raggedbottom
\begin{document}
\theoremstyle{plain}
\newtheorem{thm}{Theorem}[section]
\newtheorem{theorem}[thm]{Theorem}
\newtheorem{addendum}[thm]{Addendum}
\newtheorem{lemma}[thm]{Lemma}
\newtheorem{corollary}[thm]{Corollary}
\newtheorem{proposition}[thm]{Proposition}
\theoremstyle{definition}
\newtheorem{remark}[thm]{Remark}
\newtheorem{remarks}[thm]{Remarks}
\newtheorem{notations}[thm]{Notations}
\newtheorem{definition}[thm]{Definition}
\newtheorem{claim}[thm]{Claim}
\newtheorem{assumption}[thm]{Assumption}
\newtheorem{assumptions}[thm]{Assumptions}
\newtheorem{property}[thm]{Property}
\newtheorem{properties}[thm]{Properties}
\newtheorem{example}[thm]{Example}
\newtheorem{examples}[thm]{Examples}
\newtheorem{conjecture}[thm]{Conjecture}
\newtheorem{questions}[thm]{Questions}
\newtheorem{question}[thm]{Question}
\numberwithin{equation}{section}
\newcommand{\sA}{{\mathcal A}}
\newcommand{\sB}{{\mathcal B}}
\newcommand{\sC}{{\mathcal C}}
\newcommand{\sD}{{\mathcal D}}
\newcommand{\sE}{{\mathcal E}}
\newcommand{\sF}{{\mathcal F}}
\newcommand{\sG}{{\mathcal G}}
\newcommand{\sH}{{\mathcal H}}
\newcommand{\sI}{{\mathcal I}}
\newcommand{\sJ}{{\mathcal J}}
\newcommand{\sK}{{\mathcal K}}
\newcommand{\sL}{{\mathcal L}}
\newcommand{\sM}{{\mathcal M}}
\newcommand{\sN}{{\mathcal N}}
\newcommand{\sO}{{\mathcal O}}
\newcommand{\sP}{{\mathcal P}}
\newcommand{\sQ}{{\mathcal Q}}
\newcommand{\sR}{{\mathcal R}}
\newcommand{\sS}{{\mathcal S}}
\newcommand{\sT}{{\mathcal T}}
\newcommand{\sU}{{\mathcal U}}
\newcommand{\sV}{{\mathcal V}}
\newcommand{\sW}{{\mathcal W}}
\newcommand{\sX}{{\mathcal X}}
\newcommand{\sY}{{\mathcal Y}}
\newcommand{\sZ}{{\mathcal Z}}
\newcommand{\A}{{\mathbb A}}
\newcommand{\B}{{\mathbb B}}
\newcommand{\C}{{\mathbb C}}
\newcommand{\D}{{\mathbb D}}
\newcommand{\E}{{\mathbb E}}
\newcommand{\F}{{\mathbb F}}
\newcommand{\G}{{\mathbb G}}
\newcommand{\BH}{{\mathbb H}}
\newcommand{\I}{{\mathbb I}}
\newcommand{\J}{{\mathbb J}}
\newcommand{\BL}{{\mathbb L}}
\newcommand{\M}{{\mathbb M}}
\newcommand{\N}{{\mathbb N}}
\newcommand{\BP}{{\mathbb P}}
\newcommand{\Q}{{\mathbb Q}}
\newcommand{\R}{{\mathbb R}}
\newcommand{\BS}{{\mathbb S}}
\newcommand{\T}{{\mathbb T}}
\newcommand{\U}{{\mathbb U}}
\newcommand{\V}{{\mathbb V}}
\newcommand{\W}{{\mathbb W}}
\newcommand{\X}{{\mathbb X}}
\newcommand{\Y}{{\mathbb Y}}
\newcommand{\Z}{{\mathbb Z}}
\newcommand{\rk}{{\rm rk}}
\newcommand{\ch}{{\rm c}}
\newcommand{\Sp}{{\rm Sp}}
\newcommand{\Sl}{{\rm Sl}}
\maketitle

\footnotetext[1]{This work was supported by the SFB/TR 45  Periods,
Moduli Spaces and Arithmetic of Algebraic Varieties  of the DFG
(German Research Foundation).}

\footnotetext[2]{The first and the second named authors are also supported by NSFC,
the Science  Foundation  of  the EMC  and the Foundation of  Scientific Program of Shanghai. }

\begin{abstract}
{We establish the canonical class inequality for families of higher dimensional projective manifolds. As an application,
we  get a new  inequality  between the Chern numbers  of  3-folds with  smooth families
of minimal surfaces  of general type over a curve, $c_1^3<18c_3$.}
\end{abstract}

 \section{Introduction}
 Let $f:X\to Y$ be a semistable family of $n$-dimensional projective manifolds over  a projective $m$-fold $Y$.
 Let $\sL$ be a line bundle on $X$. The volume of $\sL$ is defined as
$$
v(\sL)=\limsup \frac{\dim(X)! \cdot \dim(H^0(X,\sL^\nu))}{\nu^{\dim(X)}}.
$$
If $\sL$ is nef, then \cite[Lemma 3.1]{V82} says that
$$
\dim(H^i(X,\sL^\nu))\leq a_i\cdot \nu^{\dim(X)-i}
$$
and hence the Hirzebruch-Riemann-Roch Theorem implies that
$v(\sL)=\ch_1(\sL)^{\dim(X)}$.

In this paper we study the upper bound of the volume of
$\omega_{X/Y}$.  The main result can be stated as follows.
\begin{theorem}\label{mainresult1}
Let $f:X\to Y$ be a semistable non-isotrivial family of  minimal
$n$-folds over a curve $Y$ of genus $b$. (i.e. that for all smooth
fibre $F_f$ of $f$ the canonical line bundle $\omega_{F_y}$ is
semiample.) Denote by
 $s=\# S$ the number of singular fibres of $f$ over $S\subset Y$.
Then
\begin{align}\label{EQ1.1}
v(\omega_{X/Y}) \leq \frac{(n+1)n}{2} \cdot v(\omega_F) \cdot
\deg\Omega^1_Y(\log S).
\end{align}

In particular, if $b\geq 1$, then we get
\begin{align}\label{EQ1.2}
v(\omega_{X}) \leq v(\omega_F) \cdot \left( \frac{(n+1)(n+2)}{2}
v(\omega_Y) + \frac{n(n+1)s}{2}\right).
 \end{align}
\end{theorem}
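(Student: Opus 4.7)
First I would reduce \eqref{EQ1.2} to \eqref{EQ1.1} by a direct intersection computation. Since $\omega_{X}=\omega_{X/Y}+f^{\ast}\omega_{Y}$ and $(f^{\ast}\omega_{Y})^{2}\equiv 0$ on $X$ (as $\omega_{Y}$ is a divisor on a curve), one has
\begin{equation*}
\omega_{X}^{n+1}=\omega_{X/Y}^{n+1}+(n+1)\,\omega_{X/Y}^{n}\cdot f^{\ast}\omega_{Y}=v(\omega_{X/Y})+(n+1)\,v(\omega_{F})\,v(\omega_{Y}),
\end{equation*}
using $\omega_{X/Y}^{n}\cdot f^{\ast}\omega_{Y}=v(\omega_{F})\cdot\deg\omega_{Y}$ on a general fibre class. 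For $b\geq 1$ we have $\deg\Omega^{1}_{Y}(\log S)=v(\omega_{Y})+s$, so substituting the bound \eqref{EQ1.1} and collecting the coefficients of $v(\omega_{Y})$ and $s$ yields exactly the right-hand side of \eqref{EQ1.2}. Everything therefore reduces to \eqref{EQ1.1}.

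The plan for \eqref{EQ1.1} is to rephrase the volume bound as an asymptotic slope inequality for the pluricanonical pushforwards $\sE_{\nu}:=f_{\ast}\omega_{X/Y}^{\nu}$. Since $\omega_{F_{y}}$ is semiample on every smooth fibre, for $\nu$ sufficiently large and divisible $\sE_{\nu}$ is locally free on $Y$ of rank $r_{\nu}=\frac{v(\omega_{F})}{n!}\nu^{n}+O(\nu^{n-1})$. By relative Kodaira vanishing, $R^{i}f_{\ast}\omega_{X/Y}^{\nu}=0$ for $i>0$ and $\nu\gg 0$; combining this with asymptotic Riemann-Roch on $X$ and Riemann-Roch on $Y$ yields
\begin{equation*}
\deg\sE_{\nu}=\frac{v(\omega_{X/Y})}{(n+1)!}\,\nu^{n+1}+O(\nu^{n}),\qquad \mu(\sE_{\nu})=\frac{v(\omega_{X/Y})}{(n+1)\,v(\omega_{F})}\,\nu+O(1).
\end{equation*}
Thus \eqref{EQ1.1} is equivalent to the Arakelov-type asymptotic slope bound $\mu(\sE_{\nu})\leq \tfrac{n\nu}{2}\,\deg\Omega^{1}_{Y}(\log S)+O(1)$ as $\nu\to\infty$.

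The heart of the argument is this slope bound, which I would prove via the Higgs bundle machinery. For $\nu=1$, $\sE_{1}=f_{\ast}\omega_{X/Y}$ is the top Hodge piece $E^{n,0}$ of the logarithmic Higgs bundle $(E,\theta)=\bigl(\bigoplus_{p+q=n}E^{p,q},\theta\bigr)$ associated to the polarized VHS $R^{n}f_{\ast}\C$ on $Y\setminus S$. Iterating $\theta\colon E^{p,q}\to E^{p-1,q+1}\otimes\Omega^{1}_{Y}(\log S)$ at most $n$ times on an arbitrary Higgs subsheaf of $E$, and invoking the Fujita-type semi-negativity of the kernels (equivalently, a Griffiths--Schmid norm estimate on the Hodge metric), produces $\deg E^{n,0}\leq \tfrac{n}{2}\,\mathrm{rank}(E^{n,0})\,\deg\Omega^{1}_{Y}(\log S)$, which is the $\nu=1$ case. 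For general $\nu$, I would invoke Viehweg's cyclic covering trick: take a semistable reduction of a branched $\nu$-cover $\pi\colon X'\to X$ ramified along a smooth divisor in $|\omega_{X/Y}^{N}|$ for suitable $N$, so that $X'\to Y$ carries a VHS whose top Hodge piece contains $\sE_{\nu}$ up to a twist by a line bundle of bounded degree. Re-running the Higgs iteration on $X'\to Y$ should then give the $\nu$-linear slope bound with the correct constant $\tfrac{n}{2}$.

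\textbf{Main obstacle.} The delicate point is the uniformity in $\nu$: one must ensure that the error introduced by the cyclic cover is genuinely $O(1)$ rather than $O(\nu)$, so as to obtain the precise coefficient $\tfrac{n(n+1)}{2}$ rather than just some $C_{n}$. This demands careful bookkeeping of the ramification divisor of $\pi$ together with Viehweg--Zuo-type weak positivity for the Hodge sheaves of semistable families. A secondary subtlety is verifying, under the fibrewise minimality and semistability hypotheses, that $v(\omega_{X/Y})$ coincides with $\omega_{X/Y}^{n+1}$ in the asymptotic Riemann--Roch step; this is standard once $\omega_{X/Y}$ is known to be nef (possibly after a modification), but must be checked explicitly.
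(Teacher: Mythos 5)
Your plan is essentially the paper's own proof: the authors likewise recast \eqref{EQ1.1} as the $\nu\to\infty$ limit of the Viehweg--Zuo Arakelov inequality $\mu(f_*\omega_{X/Y}^{\nu})\le\frac{n\nu}{2}\deg\Omega^1_Y(\log S)$, proved by the same cyclic-covering/Higgs-bundle iteration (their Proposition 2.1 and Addendum 2.2 with $\rho=\frac12$), and then pass from the degree bound to the volume via Riemann--Roch on $Y$ together with $H^1(Y,f_*\omega_{X/Y}^{\nu})=0$ (coming from ampleness of $f_*\omega_{X/Y}^{\nu}$ under non-isotriviality), which sidesteps the nefness issue you flag. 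The only cosmetic difference is that they obtain \eqref{EQ1.2} by rerunning the $h^0$-count for $\omega_X$ instead of your intersection-theoretic reduction to \eqref{EQ1.1}.
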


When $f: X\to Y$ is a non-trivial semi-stable family of curves of
genus $g\geq 2$, Vojta \cite{Vo88} shows the following canonical
class inequality by using the famous Miyaoka-Yau inequality,
\begin{align}
K_{X/Y}^2=v(\omega_{X/Y}) \leq   \deg(\omega_F)
\cdot\deg\Omega^1_Y(\log S)= (2g-2) \cdot (2b-2+s),
\end{align}
which is a special case in Theorem \ref{mainresult1}. The second
author proved that Vojta's inequality is strict when $s\neq 0$
\cite[Lemma 3.1]{Ta95}, and generalized it to the non-semistable
case \cite[Theorem 4.7]{Ta96}. K. F. Liu \cite{Li96} proved that
Vojta's inequality is strict in any case by using differential
geometric method.

The idea of our proof of Theorem \ref{mainresult1} is to use
Arakelov type inequality to get canonical class inequality.
  Viehweg and the third author \cite{VZ01, VZ05} get Arakelov
  type inequality for $\mu(f_{*}{\omega}_{X/C}^{\otimes\nu})$,
\begin{align}
\mu(f_{*}{\omega}_{X/C}^{\otimes\nu})\le \frac{n\nu}{2}(2b-2+s),
\end{align}
 where $\mu(f_{*}{\omega}_{X/C}^{\otimes\nu})$ is the
slope of the sheaf $f_{*}{\omega}_{X/C}^{\otimes\nu}$. The key point
of our proof of Theorem \ref{mainresult1} is to view the inequality
(\ref{EQ1.1}) as the limit of Viehweg-Zuo inequality when $\nu$
tends to infinity. We would like to mention that one can get the
Arakelov inequality for the case $n=1$ by combining Vojta's
inequality and Cornalba-Harris-Xiao's inequality \cite{CH88, Xi87},
$$
K_{X/Y}^2\geq \dfrac{4g-4}g\deg f_*\omega_{X/Y}.
$$

(\ref{EQ1.2}) gives an upper bound on $v(\omega_X)$. In fact,
Kawamata obtains a lower bound on $v(\omega_X)$ \cite[Theorem
7.1]{Zh07}
\begin{align}
 v(\omega_X)\geq (n+1)\cdot v(\omega_Y)\cdot v(\omega_F).
\end{align}\\[.2cm]

The following theorem is an analog of Theorem 1.1 over higher
dimensional base.\\[.2cm]

\begin{theorem}\label{mainresult2}
Let $f:X\to Y$ be a family of $n$-folds over a projective manifold
$Y$ of dimension $m$, which is semi-stable in codimension one. Let
$S$ be a normal crossing divisor on $Y$ such that $\omega_Y(S)$ is
semi-ample and ample with respect to $Y\setminus S$.
 Assume that $X$ is projective and that the fibres $F_y=f^{-1}(y)$
for $y\in Y\setminus S$ are minimal, i.e.,  $\omega_{F_y}$ is
semiample. Assume moreover that for some invertible sheaf $\sL$ on
$X$ with $\sL|_{F_y}$ ample and with Hilbert polynomial $h$ the
morphism $\varphi:Y_0 \to M_h$ is generically finite.

  Let $l_0$ denote  the smallest integer such that $|l_0\omega_Y(S)|$
defines a birational map. Then we have
\begin{align}
v(\omega_{X/Y})\leq c\cdot v(\omega_F)\cdot v(\omega_Y(S)),
\end{align}
 where $c$ is a constant depending only on $n$, $m$ and
$l_0$.
\end{theorem}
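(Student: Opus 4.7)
The plan follows the strategy announced in the introduction for Theorem \ref{mainresult1}: establish a Viehweg--Zuo type Arakelov inequality for the direct images $\sE_\nu:=f_*\omega_{X/Y}^\nu$ over the higher-dimensional base $Y$, and pass to the limit $\nu\to\infty$ to obtain the volume inequality. The additional input beyond the curve-base case is a Bertini reduction built on the bigness of $\omega_Y(S)$ guaranteed by the birationality of $|l_0\omega_Y(S)|$.

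\emph{Step 1 (reduction to a curve).} Pick $m-1$ general members $D_1,\ldots,D_{m-1}\in|l_0\omega_Y(S)|$. By Bertini $C=D_1\cap\cdots\cap D_{m-1}$ is a smooth irreducible curve meeting $S$ transversally in a reduced divisor $S_C$; for generic $C$ the restricted family $f_C: X_C \to C$ is semistable (semistability in codimension one on $Y$ suffices), with minimal smooth fibres and generically finite moduli map, so Theorem \ref{mainresult1} applies. Since the contributions of the $D_i$ cancel in the formation of relative dualizing sheaves, $\omega_{X_C/C}=\omega_{X/Y}|_{X_C}$, and adjunction gives $\deg\Omega^1_C(\log S_C)=l_0^{m-1}(1+(m-1)l_0)\,v(\omega_Y(S))$. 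Feeding these into Theorem \ref{mainresult1} and dividing by $l_0^{m-1}$ yields the mixed intersection bound
\begin{equation*}
\omega_{X/Y}^{n+1}\cdot f^*\omega_Y(S)^{m-1} \;\leq\; \tfrac{n(n+1)(1+(m-1)l_0)}{2}\, v(\omega_F)\, v(\omega_Y(S)).
\end{equation*}

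\emph{Step 2 (volume from mixed intersections).} The main obstacle is to upgrade this to a bound on $v(\omega_{X/Y})$ itself, since Khovanskii--Teissier is unavailable when $(f^*\omega_Y(S))^{m+1}=0$. My approach is to invoke the weak positivity of $\sE_\nu$ of Viehweg--Zuo \cite{VZ01,VZ05}: for $\nu\gg 0$ it produces a higher-dimensional Arakelov estimate for the slope of $\sE_\nu$ with respect to the polarization $\omega_Y(S)$, which refines Step 1 (the latter controls precisely this slope up to the factor $\rk\sE_\nu\sim v(\omega_F)\nu^n/n!$). Combining with the asymptotic Riemann--Roch $h^0(Y,L^{\otimes k})\sim k^m v(L)/m!$ for line bundles, and extending it to bounded-slope torsion-free sheaves via Kawamata-style weak positivity arguments, yields
\begin{equation*}
h^0(X,\omega_{X/Y}^\nu)=h^0(Y,\sE_\nu)\;\leq\;\frac{c(n,m,l_0)}{(n+m)!}\,v(\omega_F)\,v(\omega_Y(S))\,\nu^{n+m}+O(\nu^{n+m-1}).
\end{equation*}
The desired inequality follows on dividing by $\nu^{n+m}/(n+m)!$ and letting $\nu\to\infty$. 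The essential technical point is the higher-dimensional Arakelov inequality itself, whose proof requires the full hypothesis that $\omega_Y(S)$ is semi-ample and ample with respect to $Y\setminus S$: this is what allows the positivity of direct images to be compared to $\omega_Y(\log S)$ globally on $Y$, rather than merely over the complement of $S$.
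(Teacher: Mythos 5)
Your overall strategy (restrict to curves cut from $|l_0\omega_Y(S)|$, use an Arakelov-type slope inequality for $\sE_\nu=f_*\omega_{X/Y}^\nu$, and let $\nu\to\infty$) is the right one, and you correctly diagnose the crux: a single complete-intersection curve only controls the mixed intersection number $\omega_{X/Y}^{n+1}\cdot f^*\omega_Y(S)^{m-1}$, which cannot bound $v(\omega_{X/Y})$ because $f^*\omega_Y(S)$ is not big on $X$. But your Step 2 does not close this gap. The claim that a bound on the slope $\mu(\sE_\nu)$ with respect to $\omega_Y(S)$, combined with ``asymptotic Riemann--Roch extended to bounded-slope torsion-free sheaves via Kawamata-style weak positivity arguments,'' yields the stated estimate on $h^0(Y,\sE_\nu)$ is precisely the assertion that needs proof: on a base of dimension $m\geq 2$ the slope $\ch_1(\sE)\cdot\ch_1(\omega_Y(S))^{m-1}/\rk(\sE)$ does not by itself control $h^0(Y,\sE)$ (unlike on a curve, where Riemann--Roch plus vanishing of $h^1$ does the job), and invoking weak positivity does not by itself supply the missing $\nu^{m-1}$ worth of growth. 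As written, Step 2 is a restatement of the conclusion rather than an argument.

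The paper's device, which is the idea you are missing, is concrete and elementary. For $m=2$ (the general case follows by induction, cutting with members of $|l_0\omega_Y(S)|$), one chooses not one but $n\nu+1$ general curves $C_1,\dots,C_{n\nu+1}\in|\omega_Y(S)|$ --- a number growing linearly in $\nu$ --- and sets $D_\nu=\sum_i C_i$. The Arakelov inequality of Proposition \ref{prop1} forces $H^0(Y,f_*\omega_{X/Y}^\nu(-D_\nu))=0$: a nonzero section would produce an invertible subsheaf $\sO_Y(D_\nu)\hookrightarrow f_*\omega_{X/Y}^\nu$ whose slope $(n\nu+1)\,\omega_Y(S)^2$ exceeds the permitted bound $\nu n\rho\,\omega_Y(S)^2$. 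Hence $h^0(Y,\sE_\nu)\leq\sum_i h^0(C_i,\sE_\nu|_{C_i})$, each summand is $O(\nu^{n+1})$ with the correct constant by the curve-base argument of Lemma \ref{onedimbase} applied to $f:X_{C_i}\to C_i$, and the factor $n\nu+1$ supplies exactly the extra power of $\nu$ needed to reach order $\nu^{n+2}$. It is this restriction-to-many-curves mechanism (with the number of curves tied to $\nu$ and the vanishing certified by the slope bound) that converts the Arakelov inequality into an $h^0$ bound; you need to supply it, or an equivalent, for your proof to go through.
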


 Together with Eckart Viehweg we have thought about the Arakelov type inequality over higher dimensional base $Y$. The
generalized Arakelov type inequality plays an important  role in
this paper which therefore should be considered as a joint work with Viehweg.

\textbf{Acknowledgements:}
This work was done while the third author was visiting  Center of Mathematical Sciences
at Zhejiang University and East China Normal University. He
would like to thank both institutions' financial support and the hospitality.
The authors thank also professor De-Qi Zhang and professor Meng Chen for useful discussion.

\section{Arakelov  Inequality}
Let $Y$ be a projective $m$-fold, $Y_0$ the complement of a normal crossing divisor $S$ with $\omega_Y(S)$ semi-ample and ample with respect to $Y_0$.
For a coherent sheaf $\sK$ on $Y$ we write $\mu(\sK)$ for the slope
$\ch_1(\sK)\cdot\ch_1(\omega_Y(S))^{m-1}/rk(\sK)$.
By  Yau's fundamental theorem  on the solution of Calabi-conjecture \cite{Y93} $\Omega^1_Y(\log S)$ carries a K\"ahler-Einstein metric. Hence, $S^m(\Omega^1_Y(\log S))$  is $\mu$- polystable for all $m.$

\begin{proposition}\label{prop1}
Let $f:X\to Y$ be a family of $n$-dimensional manifolds. Assume that $X$ is projective and that
the fibres $F_y=f^{-1}(y)$, for $y\in Y_0$ are minimal, i.e. that $\omega_{F_y}$ is semiample. Assume moreover that for some
invertible sheaf $\sL$ on $X$ with $\sL|_{F_y}$ ample and with Hilbert polynomial $h$ the morphism $\varphi:Y_0 \to M_h$ is generically finite, and that $f:X\to Y$ is semi-stable in codimension one.

Then there exists a constant $\rho=\rho(Y,S)\leq 1$ with:\\
Let $\sK_\nu$ be a saturated subsheaf of $(f_*\omega_{X/Y}^\nu)^{\vee\vee}$ for some $\nu \geq 2$. Then
$$
\mu(\sK_\nu) \leq \nu \cdot n\cdot \rho \cdot \mu(\Omega^1_Y(\log S)).
$$
\end{proposition}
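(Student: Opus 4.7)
The plan is to adapt the Viehweg--Zuo Higgs bundle argument \cite{VZ01, VZ05} from a one-dimensional base to the higher-dimensional base $Y$. The central idea is to realise a twist of $\sK_\nu$ as a sub-Higgs sheaf of a logarithmic polarised variation of Hodge structure manufactured from $\omega_{X/Y}^\nu$ by a cyclic cover, and then to combine iterated Higgs fields with Yau's $\mu$-polystability of $S^k\Omega_Y^1(\log S)$ in order to bound $\mu(\sK_\nu)$.

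First I would pass to a birational model in which $f$ is semistable in codimension one, $S$ is strict normal crossing, and $\sK_\nu$ is locally free on a big open subset of $Y$. Using the semi-ampleness of $\omega_{F_y}$ and the polarisation $\sL$, Viehweg's cyclic-cover construction, applied to sections of a sufficiently high power of $\omega_{X/Y}^\nu\otimes f^*\sA^{-1}$ for an auxiliary ample line bundle $\sA$ on $Y$, produces a new smooth family $f'\colon X'\to Y$ carrying a polarised VHS on $Y\setminus S'$, where $S'\supseteq S$ is a strict normal crossing divisor whose class is dominated by a fixed multiple of $\omega_Y(S)$. Deligne's canonical extension together with Simpson--Mochizuki then yield a logarithmic Higgs bundle
$$
(E,\theta)=\Bigl(\bigoplus_{p+q=n}E^{p,q},\ \theta\colon E^{p,q}\to E^{p-1,q+1}\otimes\Omega_Y^1(\log S')\Bigr)
$$
with $\sK_\nu\otimes\sA^{-1}\hookrightarrow E^{n,0}$.

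Second, the generic finiteness of $\varphi\colon Y_0\to M_h$ guarantees non-isotriviality of the VHS in every generic tangent direction, and so the $n$-fold iterate
$$
\theta^n\colon E^{n,0}\longrightarrow E^{0,n}\otimes S^n\Omega_Y^1(\log S')
$$
is generically injective on $\sK_\nu\otimes\sA^{-1}$. Simpson's correspondence makes $(E,\theta)$ $\mu$-polystable with vanishing Chern classes, so $\mu(E^{0,n})$ is controlled from above, and Yau's theorem gives the $\mu$-polystability of $S^n\Omega_Y^1(\log S)$ for the polarisation $\omega_Y(S)$. Comparing the slope of $\sK_\nu\otimes\sA^{-1}$ with that of its saturated image in $E^{0,n}\otimes S^n\Omega_Y^1(\log S')$, and absorbing the twist by $\sA$, yields
$$
\mu(\sK_\nu)\le\nu\cdot n\cdot\rho\cdot\mu(\Omega_Y^1(\log S)),\qquad \rho\le 1,
$$
where $n$ records the number of iterations of $\theta$, the factor $\nu$ originates from the $\nu$-th power of $\omega_{X/Y}$ fed into the cyclic cover, and $\rho$ encodes the structural comparison of $\omega_Y(S')$ with $\omega_Y(S)$.

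The hard part is to make the cyclic-cover construction functorial enough over the higher-dimensional base $Y$ so that the resulting logarithmic Higgs bundle extends across all of $Y$ with poles controlled by a fixed multiple of $\omega_Y(S)$ that is independent of $\sK_\nu$, of $\nu$, and of the family $f$. This requires simultaneously resolving the indeterminacies of $\varphi$, extending Higgs sheaves across higher-codimension loci following Mochizuki, and exploiting Yau's polystability with respect to the merely semi-ample class $\omega_Y(S)$. Pinning down a single constant $\rho\le 1$ depending only on $(Y,S)$ is the most delicate point of the generalisation to higher-dimensional bases.
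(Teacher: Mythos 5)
Your overall strategy --- cyclic covering, logarithmic Higgs bundle, iterated Higgs field, and Yau's $\mu$-polystability of $S^k\Omega^1_Y(\log S)$ --- is the same as the paper's, but two of your steps have genuine gaps as stated. First, you claim that generic finiteness of $\varphi$ forces the full $n$-fold iterate $\theta^n$ to be generically injective on the image of $\sK_\nu$. This is neither proved nor needed, and it does not follow from non-isotriviality: the iteration may die at some step $\ell<n$. The paper instead takes the sub-Higgs sheaf $H$ generated by the $(n,0)$-piece, lets $\ell$ be the largest index with $H^{n-\ell,\ell}\neq 0$, observes that $H^{n-\ell,\ell}$ lies in the kernel of the Higgs field and hence has $\mu(H^{n-\ell,\ell})\le 0$, and then uses $\ell\le n$ together with $\mu(\Omega^1_Y(\log S))>0$ to replace $\mu(S^\ell(\Omega))$ by $\mu(S^n(\Omega))=n\,\mu(\Omega)$. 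Your argument only closes if you either justify the injectivity of $\theta^n$ (you cannot in general) or switch to this ``last nonzero piece'' device.

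Second, the way you place $\sK_\nu$ into the $(n,0)$-part is too loose to yield the stated constant. You twist by an auxiliary ample sheaf $\sA$, feed ``a high power of $\omega^{\nu}_{X/Y}\otimes f^*\sA^{-1}$'' into the cyclic cover, and ``absorb'' the twist at the end; as written this leaves an error term $\mu(\sA)$ that cannot be made zero, it does not address the case $\rk(\sK_\nu)>1$ (a cyclic cover needs a section of a line bundle), and it does not explain where the exact factor $\nu$ comes from. The paper's route is: reduce to $\rk(\sK_\nu)=1$ by passing to the $r$-fold fibre product, so that $\det(\sK_\nu)$ sits inside $\big(f^r_*\omega^\nu_{X^{(r)}/Y}\big)^{\vee\vee}$; pull back by a finite cover $\psi:Y''\to Y$ on which $\psi'^*\sK_\nu=\sH^\nu$ for an invertible $\sH$; and take the cyclic cover of the section of $\big(\omega_{X''/Y''}\otimes {f''}^*\sH^{-1}\big)^\nu$ induced by the inclusion $\sH^\nu\hookrightarrow f''_*\omega^\nu_{X''/Y''}$. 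Then $\varphi^*\sH$ itself lands in $G^{n,0}$, the Higgs-theoretic bound reads $\mu(\varphi^*\sH)\le \mu(S^n(\Omega))=n\,\mu(\Omega)$, and the factor $\nu$ appears exactly because $\mu(\sK_\nu)=\nu\,\mu(\sH)$. Without the rank-one reduction and the $\nu$-th root on a covering, your construction does not produce the clean inequality $\mu(\sK_\nu)\le\nu\cdot n\cdot\rho\cdot\mu(\Omega^1_Y(\log S))$.
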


\begin{proof}
Replacing $f:X\to Y$ by $f^r:X^{(r)}\to Y$ for a suitable non-singular model
of the $r$-fold fibre product, with $r=\rk(\sK_\nu)$ one finds
$$
\det(\sK_\nu)\subset \Big(\bigotimes^r f_*\omega_{X/Y}^\nu\Big)^{\vee\vee} = \big(f^r_*\omega_{X^{(r)}/Y}^\nu\big)^{\vee\vee}.
$$
Since $\mu(\det(\sK_\nu))=r\mu(\sK_\nu)$
we may assume that $\rk(\sK_\nu)=1$. Let us write $\sK=\sK_\nu$.

Choose a finite covering $\psi:Y''\to Y$ such that $\psi'^*\sK=\sH^\nu$, for an invertible sheaf
$\sH$ on $Y''$, and write $f'':X''\to Y''$ for the pullback family. For
$$\sL=\omega_{X''/Y''}\otimes {f''}^*\sH^{-1}$$
the inclusion $\sH^\nu\to f''_*\omega_{X''/Y''}^\nu$ induces a section $\sigma$ of
$\sL^\nu$. It gives rise to a cyclic covering of $X''$ whose desingularization
will be denoted by $\hat{W}$ (see \cite{EV92}, for example). Then for some divisor
$\hat{T}$ the morphism $\hat{h}:\hat{W}\to Y$ will be smooth over $Y\setminus\hat{T}$, but not semistable
in codimension one. Choose $Y'$ to be a covering, sufficiently ramified, such that the
pullback family has a semistable model over $Y'$ outside of a codimension two subscheme.
From now on we will no longer assume that $Y$, $Y''$ and $Y'$ are projective.
We will just use that those schemes are non-singular and that they are the complement of
subschemes of codimension $\leq 2$ in non-singular compactifications
$\bar{Y}$, $\bar{Y}''$ and $\bar{Y}'$. We will allow ourselves to choose those schemes
smaller and smaller, as long as this condition remains true. In this way, we may talk about
semistable reduction. Moreover, we may assume that all the discriminant divisors
are smooth. Also we can talk about the slopes in this set-up.

Next choose $W'$ to be a $\Z/\nu$ equivariant desingularization of $\hat{W}\times_YY'$, and $Z$ to be a desingularization of the quotient.
Finally let $W$ be the normalization of $Z$ in the function field of $\hat{W}\times_YY'$.
So we have a diagram of proper morphisms
\begin{equation}\label{eqco.1}
\begin{CD}
W @>\tau >> Z @> \delta >> X' @> \varphi' >> X'' @>\psi' >> X\\
@V h VV @V g VV @V f' VV @V f'' VV @V f VV\\
Y' @> = >> Y' @> = >> Y' @> \varphi >> Y'' @>\psi >> Y.
\end{CD}
\end{equation}
The $\nu$-th power of the sheaf $\sM=\delta^*\varphi'^*\sL$ has the section $\sigma'=\delta^*\varphi'^*(\sigma)$.
The sum of its zero locus and the singular fibres will become a normal crossing divisor after
a further blowing up. Replacing $Y'$ by a larger covering, one may assume that $Z\to Y'$ is semistable,
and that $Z$ and $D$ satisfy the assumption iii) stated below.

For a suitable choice of $T$ one has the following conditions:
\begin{enumerate}
\item[i.] $X'=X\times_YY'$, and $\tau:W\to Z$ is the finite covering obtained by taking the $\nu$-th root out of $\sigma' \in H^0(Z,\sM^\nu)$.
\item[ii.] $g$ and $h$ are both smooth over $Y'\setminus T'$
for a divisor $T'$ on $Y'$ containing $\varphi^{-1}(S+T)$.
Moreover $g$ is semistable and the local monodromy of $R^nh_*\C_{W\setminus h^{-1}(T')}$
in $t\in T'$ are unipotent.
\item[iii.] $\delta$ is a modification, and $Z\to Y'$ is semistable. Writing $\Delta'=g^*T'$ and $D$ for the zero divisor of $\sigma'$ on $Z$, the divisor
$\Delta'+D$ has normal crossing and $D_{\rm red}\to Y'$ is \'etale over $Y'\setminus T'$.
\item[iv.] $\delta_*(\omega_{Z/Y'} \otimes \sM^{-1})=\varphi^*(\sH)$,
\end{enumerate}
In fact, since $f:X\to Y$ is semistable, $X'$ has at most rational double points. Then
$$
\delta_*(\omega_{Z/Y'} \otimes \delta^*\varphi'^*\omega_{X/Y}^{-1})=
\delta_*(\omega_{Z/Y'} \otimes \delta^*\omega_{X'/Y'}^{-1})=\delta_*\omega_{Z/X'}= \sO_{X'},
$$
which implies iv). The properties i), ii) and iii) hold by construction.

$W$ might be singular, but the sheaf $\Omega_{W/Y'}^p(\log \tau^*\Delta')=\tau^*\Omega_{Z/Y'}^p(\log
\Delta')$ is locally free and compatible with desingularizations.
The Galois group $\Z/\nu$ acts on the direct image sheaves
$\tau_*\Omega_{W/Y'}^p(\log \tau^*\Delta')$. As in \cite{EV92} or \cite[Section 3]{VZ05} one has the following description of the sheaf of eigenspaces.
\begin{claim}
Let $\Gamma'$ be the sum over all components of $D$, whose multiplicity
is not divisible by $\nu$. Then the sheaf
$$
\Omega^p_{Z/Y'}(\log (\Gamma'+\Delta'))\otimes \sM^{-1} \otimes \sO_{Z}\big(\big[\frac{D}{\nu}\big]\big),
$$
is a direct factor of ${\tau}_*\Omega^p_{W/Y'}(\log {\tau}^*\Delta')$. Moreover the $\Z/\nu$ action on $W$
induces a $\Z/\nu$ action on
$$
\W=R^nh_*\C_{W\setminus \tau^{-1}\Delta'}
$$
and on its Higgs bundle. One has a decomposition of $\W$ in a direct sum of sub variations of Hodge structures, given by the eigenspaces for this action, and the Higgs bundle of one of them is of the form
$ G=\bigoplus_{q=0}^n G^{n-q,q}$ for
$$
G^{p,q}=R^qg_*\big(\Omega^{p}_{Z/Y'}(\log (\Gamma'+\Delta'))\otimes \sM^{-1}
\otimes \sO_{Z}\big(\big[\frac{D}{\nu}\big]\big)\big).
$$
The Higgs field $\theta_{p,q}:G^{p,q} \to G^{p-1,q+1}\otimes \Omega^1_{Y'}(\log T')$ is induced by the edge
morphisms of the exact sequence
\begin{multline}\label{eqco.2}
0\longrightarrow
\Omega^{p-1}_{Z/Y'}(\log (\Gamma'+\Delta'))\otimes {g}^* \Omega^1_{Y'}(\log T')\\
\longrightarrow {\mathfrak g} \Omega^{p}_{Z}(\log (\Gamma'+\Delta'))
\longrightarrow \Omega^{p}_{Z/Y'}(\log (\Gamma'+\Delta')) \longrightarrow 0,
\end{multline}
tensorized with $\sM^{-1} \otimes \sO_{Z}\big(\big[\frac{D}{\nu}\big]\big)$.
Here ${\mathfrak g} \Omega^{p}_{Z}(\log (\Gamma'+\Delta'))$ denotes the quotient of
$\Omega^{p}_{Z}(\log (\Gamma'+\Delta'))$ by the subsheaf
$\Omega^{p-2}_{Z}(\log (\Gamma'+\Delta'))\otimes {g}^* \Omega^2_{Y'}(\log T').$
\end{claim}
The sheaf
$$
G^{n,0}=g_*\big(\Omega^n_{Z/Y'}(\log (\Gamma'+\Delta'))\otimes \sM^{-1}
\sO_{Z}\big(\big[\frac{D}{\nu}\big]\big)\big)
$$
contains the invertible sheaf
$$
g_*\big(\Omega^n_{Z/Y'}(\log \Delta')\otimes \sM^{-1}\big)=
g_*(\omega_{Z/Y'} \otimes \sM^{-1})=\varphi^*(\sH).
$$
Let us write $\Omega=\varphi^*\psi^*\Omega_Y(\log S)$, and $\Omega^\vee$ for its dual.
\begin{claim}\label{Hsub}
Let
$$
(H=\bigoplus_{q=0}^n H^{n-q,q} ,\theta|_H)
$$
be the sub Higgs bundle of $(G,\theta)$, generated by $\varphi^*(\sH)$. Then there
is a  map
$$
\varphi^*(\sH)\otimes S^{q}(\Omega^\vee)\longrightarrow H^{n-q,q}.
$$
which is surjective over some open dense subscheme.
\end{claim}
\begin{proof}
Writing $\Delta=f^*(S+T)$ consider the tautological exact sequences
\begin{equation}\label{eqco.3}
0\to
\Omega^{p-1}_{X/Y}(\log \Delta)\otimes {f}^* \Omega^1_{Y}(\log S+T)
\longrightarrow {\mathfrak g} \Omega^{p}_{X}(\log \Delta)
\longrightarrow \Omega^{p}_{X/Y}(\log \Delta) \to 0,
\end{equation}
tensorized with
$$
\omega_{X/Y}^{-1}=(\Omega^{n}_{X/Y}(\log \Delta))^{-1}.
$$
Taking the edge morphisms one obtains a Higgs bundle $H_0$ starting with the $(n,0)$ part $\sO_Y$. The sub
Higgs bundle generated by $\sO_Y$ has a quotient of $S^q(T^1_Y(-\log(S+T)))$ in degree $(n-q,q)$

On the other hand, the pullback of the exact sequence (\ref{eqco.3}) to $Z$ is a subsequence of
$$
0\to
\Omega^{p-1}_{Z/Y'}(\log \Delta')\otimes {g}^* \Omega^1_{Y'}(\log T')
\to {\mathfrak g} \Omega^{p}_{Z}(\log \Delta')
\to \Omega^{p}_{Z/Y'}(\log \Delta') \to 0,
$$
hence of the sequence (\ref{eqco.2}), as well. So the Higgs field of $\varphi^*H_0$
is induced by the edge morphism of the exact sequence (\ref{eqco.2}), tensorized with
$$
\varphi'^*\psi'^*(\omega_{X/Y}^{-1}).
$$
One obtains a morphism of Higgs bundles $\varphi^*(\sH\otimes\psi^*H_0)\to G$. By definition
$$\begin{CD}
\varphi^*(\sH\otimes\psi^*H_0^{n,0})= \varphi^*(\sH)= H^{n,0} @> \subset >> G^{n,0},
\end{CD}$$
and $H$ is the image of $\varphi^* H_0$ in $G$.
\end{proof}
Choose $\ell$ to be the largest integer with $H^{n-\ell,\ell}\neq 0$. Obviously $\ell \leq n$ and
$$
H^{n-\ell,\ell} \subset {\rm Ker}\big(H^{n-\ell,\ell} \to H^{n-\ell-1,\ell+1}\otimes \Omega_{Y'}(\log T')\big),
$$
hence $\mu(H^{n-\ell,\ell}) \leq 0$.

Since $\mu\Omega>0$  and $\ell\leq n,$
\begin{gather*}
\mu(\varphi^*\sH)-\mu(S^n (\Omega)) \leq
\mu(\varphi^*\sH)-\mu(S^\ell (\Omega).
\end{gather*}

 Applying  the Claim \ref{Hsub}
 there
is a  map
$$
\varphi^*(\sH)\otimes S^{\ell}(\Omega^\vee)\longrightarrow H^{n-\ell,\ell}.
$$
which is surjective over some open dense subscheme.  The $\mu$-stability of  $ \varphi^*(\sH)\otimes S^{\ell}(\Omega^\vee)$ by Yau's theorem and  $\mu(H^{n-\ell,\ell}) \leq 0$ imply

$$   \mu(\varphi^*\sH)-\mu(S^\ell (\Omega)) \leq \mu (H^{n-\ell,\ell})\leq 0.$$

Putting  the above two slope  inequalities together we obtain

$$
\mu(\varphi^*\sH)-\mu(S^n (\Omega)) \leq 0.$$

\end{proof}

\begin{addendum}\label{addprop1}
If in Proposition \ref{prop1} $Y_0$ is a generalized Hilbert modular variety of dimension $m\geq 1$, then on may choose $\rho = \frac{m}{m+1}$.
\end{addendum}
\begin{proof}
If $Y_0$ is a Hilbert modular variety, then in the Claim \ref{Hsub} one has
an isomorphism
$$
H^{n-q,q}\cong \varphi^*(\sH)\otimes S^{q}(\Omega^\vee).
$$
This in turn implies that the slope of $H$ is
$$
\mu(\varphi^*\sH)-\mu(\bigoplus_{i=0}^n S^i (\Omega)) \leq 0.
$$
Note that $\bigoplus_{i=0}^n S^i (\Omega)=S^n(\sO_{Y'}\oplus \Omega)$.
Then
\begin{multline*}
\mu(\varphi^*\sH^\nu)\leq \nu \cdot \mu(\bigoplus_{i=0}^g S^i (\Omega))=
\nu \cdot \mu(S^n(\sO_{Y'}\oplus \Omega))= \nu \cdot n \cdot \frac{m}{m+1}\cdot
\mu(\Omega).
\end{multline*}
\end{proof}
\section{Canonical Class Inequality}
\begin{lemma}\label{onedimbase}
Let  $f:X\to Y$ be a semi-stable non-isotrivial family of  minimal $n$-folds of general type  over a curve $Y$ of genus $b$ and with $s=\# S$ singular fibres
over $S$.  Then
$$
v(\omega_{X/Y}) \leq \frac{(n+1)n}{2} \cdot\ch_1(\omega_F)^n\cdot\deg\Omega^1_Y(\log S)=
\frac{(n+1)n}{2} \cdot v(\omega_F) \cdot \deg\Omega^1_Y(\log S).
$$
If $b\geq 1$ then
$$
v(\omega_{X}) \leq v(\omega_F) \cdot \big( \frac{(n+1)(n+2)}{2} v(\omega_Y) +
\frac{n(n+1)s}{2}\big).
$$
\end{lemma}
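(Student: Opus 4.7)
The plan is to write $v(\omega_{X/Y})$ as the asymptotic limit of $(n+1)!\,h^0(X,\omega_{X/Y}^\nu)/\nu^{n+1}$, rewrite that $h^0$ on the base via $E_\nu := f_*\omega_{X/Y}^\nu$, and then feed in the Arakelov bound $\deg(E_\nu)\leq r_\nu\cdot\tfrac{\nu n}{2}(2b-2+s)$ supplied by Proposition~\ref{prop1} (in its sharp curve form $\rho=\tfrac12$, i.e.\ the Viehweg--Zuo inequality quoted in the introduction). Passing to the limit $\nu\to\infty$ then produces the canonical class inequality as the asymptotic shadow of Arakelov.

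Because $f$ is semistable with minimal fibres, $\omega_{X/Y}$ is nef on $X$ and $E_\nu$ is a nef locally free sheaf on $Y$ of rank $r_\nu=h^0(F,\omega_F^\nu)=\frac{v(\omega_F)}{n!}\nu^n+O(\nu^{n-1})$, by Fujita--Kawamata--Viehweg semi-positivity. Koll\'ar's vanishing yields $R^if_*\omega_{X/Y}^\nu=0$ for $i>0$ and $\nu\gg 0$, so $h^0(X,\omega_{X/Y}^\nu)=h^0(Y,E_\nu)$; nefness of $E_\nu$ on the curve forces $h^1(Y,E_\nu)=O(\nu^n)$ via Serre duality. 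Riemann--Roch on $Y$ therefore gives
\[
h^0(X,\omega_{X/Y}^\nu)=\deg(E_\nu)+r_\nu(1-b)+O(\nu^n),
\]
while asymptotic Riemann--Roch on $X$ combined with Demailly's bound $h^i(X,\omega_{X/Y}^\nu)=O(\nu^{n+1-i})$ for nef line bundles identifies $v(\omega_{X/Y})=\omega_{X/Y}^{n+1}$. Inserting the Arakelov bound and letting $\nu\to\infty$,
\[
v(\omega_{X/Y})\leq (n+1)!\cdot\frac{v(\omega_F)}{n!}\cdot\frac{n}{2}(2b-2+s)=\frac{n(n+1)}{2}\,v(\omega_F)\,(2b-2+s).
\]

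For the second inequality, assume $b\geq 1$ so that $\omega_Y$ is nef; then $\omega_X=\omega_{X/Y}\otimes f^*\omega_Y$ is nef and $v(\omega_X)=\omega_X^{n+1}$. Since $\dim Y=1$ the binomial expansion truncates:
\[
\omega_X^{n+1}=\omega_{X/Y}^{n+1}+(n+1)\,\omega_{X/Y}^n\cdot f^*\omega_Y=v(\omega_{X/Y})+(n+1)(2b-2)\,v(\omega_F),
\]
and substituting the previous bound together with $v(\omega_Y)=2b-2$ yields, after regrouping,
\[
v(\omega_X)\leq v(\omega_F)\Bigl[\tfrac{(n+1)(n+2)}{2}v(\omega_Y)+\tfrac{n(n+1)s}{2}\Bigr].
\]

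The main obstacle is extracting the sharp constant $\rho=\tfrac12$ on curves from Proposition~\ref{prop1}, whose proof literally outputs only $\rho\leq 1$ via $\mu(\varphi^*\sH)\leq\mu(S^n(\Omega))=n\mu(\Omega)$. The extra factor of two on a curve comes from the observation that the full sub-Higgs bundle $H=\bigoplus_{q=0}^{\ell}H^{n-q,q}$ built in the proof of Proposition~\ref{prop1} is a polarized sub-variation of Hodge structure on $Y'$ with unipotent monodromy, so its Deligne canonical extension has total degree zero. Combining this with the surjections $\varphi^*\sH\otimes S^q(\Omega^\vee)\twoheadrightarrow H^{n-q,q}$ of Claim~\ref{Hsub} and the $\mu$-semistability of $\varphi^*\sH\otimes S^q(\Omega^\vee)$ (Yau) gives $\mu(H^{n-q,q})\geq \mu(\varphi^*\sH)-q\mu(\Omega)$; averaging over $q=0,\dots,\ell$ (all pieces of rank one on a curve) and using $\sum_q\mu(H^{n-q,q})=0$ then forces $\mu(\varphi^*\sH)\leq\tfrac{\ell}{2}\mu(\Omega)\leq\tfrac{n}{2}\mu(\Omega)$, which is exactly what is needed.
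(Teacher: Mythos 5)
Your proposal is correct and follows the same overall strategy as the paper's proof: reduce $h^0(X,\omega_{X/Y}^\nu)$ to $h^0(Y,f_*\omega_{X/Y}^\nu)$, apply Riemann--Roch on the curve, insert the Arakelov bound $\deg(f_*\omega_{X/Y}^\nu)\leq \rk(f_*\omega_{X/Y}^\nu)\cdot\frac{n\nu}{2}(2b-2+s)$, and let $\nu\to\infty$. Two details differ. First, the constant $\frac12$: the paper gets it by invoking Addendum \ref{addprop1} (a non-isotrivial curve base being a one-dimensional ``Hilbert modular variety''), whose proof asserts isomorphisms $H^{n-q,q}\cong \varphi^*(\sH)\otimes S^q(\Omega^\vee)$; you rederive it by averaging the slope estimates coming from the generic surjections of Claim \ref{Hsub} against the non-positivity of $\deg H$, which is the more robust (and standard Viehweg--Zuo) route. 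The one imprecision there is your justification of $\deg H\leq 0$: $H$ is only a sub-Higgs bundle, not a priori a sub-variation of Hodge structure, so ``its Deligne extension has degree zero'' is not available as stated; what you actually need --- and what holds, by Simpson's theory --- is that any Higgs subsheaf of the canonically extended Higgs bundle of a polarized VHS with unipotent local monodromy has degree $\leq 0$, and the inequality $\leq 0$ suffices for your averaging. Second, for the bound on $v(\omega_X)$ the paper simply reruns the $h^0$-count for $f_*\omega_X^\nu=f_*\omega_{X/Y}^\nu\otimes\omega_Y^{\nu}$, whereas you expand $\omega_X^{n+1}$ by intersection theory; this requires identifying volumes with top self-intersections, i.e.\ nefness of $\omega_{X/Y}$ (hence of $\omega_X$ for $b\geq 1$), a true but nontrivial input for families of minimal $n$-folds that the paper's purely cohomological count avoids. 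Minor remarks: $h^0(X,\mathcal{L})=h^0(Y,f_*\mathcal{L})$ needs no vanishing theorem at all, and the paper proves $H^1(Y,f_*\omega_{X/Y}^\nu)=0$ exactly for $\nu\gg 0$ where you settle for an $O(\nu^n)$ estimate --- either works after dividing by $\nu^{n+1}$.
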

\begin{proof}
The non-isotriviality implies that $f_*\omega^\nu_{X/Y}$ is ample for all $\nu \geq 2$
with $f_*\omega^\nu_{X/Y}\neq 0$. For $\nu$ large enough, and for all $\mu$ the multiplication maps
$$
S^\mu(f_*\omega^\nu_{X/Y}) \longrightarrow f_*\omega^{\nu\cdot \mu}_{X/Y}
$$
are surjective over some open dense subscheme. In particular for $\mu$ sufficiently large
there is an ample invertible sheaf $\sH$ of degree larger than $2g-1$, and a morphism
$$
\bigoplus \sH \longrightarrow f_*\omega^{\nu\cdot \mu}_{X/Y}
$$
which is again surjective over some open dense subscheme. This implies that
$$
H^1(Y,f_*\omega^{\nu}_{X/Y})=0
$$
for all large $\nu$. If $b\geq 1$ one also obtains that $H^1(Y,f_*\omega^{\nu}_{X}) = 0$.
By the Riemann-Roch theorem for vector bundles on curves the first vanishing implies that
$$
\dim(H^0(X,\omega_{X/Y}^\nu))=\dim(H^0(Y,f_*\omega_{X/Y}^\nu))=
\deg(f_*\omega_{X/Y}^\nu) + \rk(f_*\omega_{X/Y}^\nu)\cdot (1-b).
$$
The slope inequality in Proposition \ref{prop1}, together with the improvement obtained in the addendum
\ref{addprop1} imply that
$$
\dim(H^0(X,\omega_{X/Y}^\nu))\leq \rk(f_*\omega_{X/Y}^\nu)\cdot \big( \nu \cdot n \cdot \frac{1}{2} \cdot \deg(\Omega^1_Y(\log S)) +
(1-b)\big).
$$
Since $\rk(f_*\omega_{X/Y}^\nu)$ is given by a polynomial of degree $n=\dim(X)-1$ and with highest coefficient
$$
\frac{\nu^n}{n!}\cdot\ch_1(\omega_F)^n=\frac{\nu^n}{n!}\cdot v(\omega_F)
$$
one finds that
$$
v(\omega_{X/Y})\leq \frac{(n+1)n}{2} \cdot v(\omega_F) \cdot \deg\Omega^1_Y(\log S).
$$
For the second inequality we repeat the same calculation for $\omega_{X}$ instead of
$\omega_{X/Y}$, and obtain
\begin{multline*}
\dim(H^0(X,\omega_{X}^\nu)) = \deg(f_*\omega_{X}^\nu) + \rk(f_*\omega_{X/Y}^\nu)\cdot (1-b)\\
= \deg(f_*\omega_{X/Y}^\nu) + \rk(f_*\omega_{X/Y}^\nu)\cdot (\nu \cdot (2b-2) + (1-b))=\\
\deg(f_*\omega_{X/Y}^\nu) + \rk(f_*\omega_{X/Y}^\nu)\cdot (2\nu-1) \cdot (b-1) \\
\leq \rk(f_*\omega_{X/Y}^\nu)\cdot \big( \nu \cdot n \cdot \frac{1}{2} \cdot \deg(\Omega^1_Y(\log S)) +
(2\nu-1)\cdot(1-b)\big).
\end{multline*}
Again, taking the limit  for $\nu\to\infty$ one obtains the inequality
$$
v(\omega_X) \leq (n+1) \cdot v(\omega_F)\cdot\big( \frac{n}{2} (2b-2+s) + (2b-2) \big).
$$
Since $(2b-2)=v(\omega_Y)$ one obtains the second inequality stated in Lemma \ref{onedimbase}.
\end{proof}

\begin{lemma}\label{higherdimbase}

Let $f:X\to Y$ be a family of $n$-fold over a base $Y$ of dimension $m$ and satisfying the
condition required in Prop. \ref{prop1}. Further more let $l_0$ be the smallest integer such that
$|l_0\omega_Y(S)|$ defines  birational map. Then there exists a constant $c$ depending only on
$n$, $m$ and $l_0$ such that

$$v(\omega_{X/Y})\leq c\cdot v(\omega_F)\cdot v(\omega_Y(S)).$$

\end{lemma}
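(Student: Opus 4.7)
The plan is to follow the structure of the proof of Lemma~\ref{onedimbase}, replacing the curve Riemann--Roch step by an asymptotic estimate for $h^0$ on the higher-dimensional base $Y$ that exploits the birationality of $|l_0\omega_Y(S)|$.

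As before, $h^0(X,\omega_{X/Y}^\nu)=h^0(Y,f_*\omega_{X/Y}^\nu)$, and the rank $r_\nu=\rk(f_*\omega_{X/Y}^\nu)$ is a polynomial in $\nu$ of degree $n$ with leading coefficient $v(\omega_F)/n!$. Setting $L=\omega_Y(S)$, Proposition~\ref{prop1} gives
$$
\mu_{\max}\bigl((f_*\omega_{X/Y}^\nu)^{\vee\vee}\bigr)\leq \nu\cdot n\cdot\rho\cdot\mu(\Omega^1_Y(\log S))=\frac{\nu\,n\,\rho}{m}\,v(L),
$$
so the maximal slope of the Harder--Narasimhan filtration of $f_*\omega_{X/Y}^\nu$ (with respect to the polarization $L$) grows at most linearly in $\nu$.

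The crucial new ingredient is a Langer-type section estimate: for every torsion-free sheaf $\sE$ on $Y$ polarized by $L$,
$$
h^0(Y,\sE)\leq \rk(\sE)\cdot\frac{(\mu_{\max}(\sE)_{+}+c_{1})^{m}}{m!\cdot v(L)^{m-1}}+O\bigl(\mu_{\max}(\sE)^{m-1}\bigr),
$$
where $c_{1}=c_{1}(l_0)$ arises from a Kodaira decomposition $l_0 L=A+E$ (ample plus effective), available precisely because $|l_0 L|$ defines a birational map. I would prove this by induction on $m$: Bertini supplies a general smooth irreducible $H\in|l_0 L|$, Mehta--Ramanathan gives $\mu_{\max}(\sE|_{H})\leq l_0^{m-1}\mu_{\max}(\sE)$, and the exact sequence $0\to\sE(-H)\to\sE\to\sE|_{H}\to 0$ combined with the induction hypothesis on $H$ yields the bound; the base case $m=1$ is the Clifford/Riemann--Roch estimate already used in Lemma~\ref{onedimbase}.

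Feeding the slope bound together with $r_\nu\sim \nu^n v(\omega_F)/n!$ into the section estimate gives
$$
h^0(X,\omega_{X/Y}^\nu)\leq \frac{\nu^{n+m}\,(n\rho/m)^{m}\,v(\omega_F)\,v(L)}{n!\,m!}+O(\nu^{n+m-1}),
$$
so multiplying by $(n+m)!/\nu^{n+m}$ and letting $\nu\to\infty$ yields $v(\omega_{X/Y})\leq c\cdot v(\omega_F)\cdot v(\omega_Y(S))$ with $c=\binom{n+m}{m}(n\rho/m)^{m}\leq \binom{n+m}{m}(n/m)^{m}$, a constant depending only on $n$, $m$ and (through the implicit constant in the section bound) $l_0$. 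The principal obstacle is the section estimate itself: to ensure $c$ depends only on $n$, $m$, $l_0$ and not on the finer geometry of $(Y,S)$, the induction must be performed using members of $|l_0 L|$ rather than an auxiliary ample class whose self-intersection would otherwise leak into the constant. A secondary technicality, dispatched exactly as in Proposition~\ref{prop1}, is that $f_*\omega_{X/Y}^\nu$ need not be reflexive, but passing to its reflexive hull preserves ranks, slopes, and the asymptotic count of global sections.
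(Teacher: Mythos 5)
Your overall skeleton coincides with the paper's: kill $H^0(Y,f_*\omega^\nu_{X/Y}(-D))$ for a divisor $D$ built from roughly $n\nu/l_0$ members of $|l_0\omega_Y(S)|$ (a nonzero section would give an invertible subsheaf of $f_*\omega^\nu_{X/Y}$ whose slope violates Proposition \ref{prop1}), then bound the sections of the restrictions to the members and take the limit in $\nu$. But your proof of the ``Langer-type section estimate'' has a genuine gap at the restriction step. The inequality $\mu_{\max}(\sE|_H)\leq l_0^{m-1}\mu_{\max}(\sE)$ for a \emph{single general} member $H\in|l_0 L|$ is false: already $\sE=T_{\BP^2}$ with $L=\sO(1)$, $H$ a line, gives $\mu_{\max}(\sE)=3/2$ while $T_{\BP^2}|_H\cong\sO(2)\oplus\sO(1)$ has $\mu_{\max}=2$. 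Mehta--Ramanathan only controls restrictions to complete intersections of degree large \emph{depending on the sheaf}; since your $\sE=f_*\omega^\nu_{X/Y}$ varies with $\nu$, that degree (or, in Langer's effective version, the discriminant term) is not uniformly bounded and could contaminate the leading asymptotics in $\nu$, which is exactly where the constant $c$ lives. As written, the induction does not close.

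The paper sidesteps this entirely, and you should too: the sheaf being restricted is not an abstract torsion-free sheaf but satisfies $f_*\omega^\nu_{X/Y}|_{C}=f_*\omega^\nu_{X_{C}/C}$ for the restricted family $X_C\to C$ over a curve $C\in|\omega_Y(S)|$ in general position. One then applies the one-dimensional Arakelov inequality (Proposition \ref{prop1} together with Addendum \ref{addprop1}, exactly as in Lemma \ref{onedimbase}) \emph{to the family over $C$}, obtaining $h^0(C,f_*\omega^\nu_{X_C/C})\leq h^0(F,\omega_F^\nu)\cdot\frac{n}{2}\nu\cdot\deg\Omega^1_C(S)$ with $\deg\Omega^1_C(S)$ computed by adjunction from $\omega_Y(S)$. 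No restriction theorem for Harder--Narasimhan filtrations is needed, and the constant visibly depends only on $n$, $m$ and $l_0$. Your vanishing step and the final limit computation are correct and match the paper; it is only the source of the slope bound on the hypersurface sections that must be changed.
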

\begin{proof}  We prove the statement for the case $m=2$. The general case follows from by taking hypersurface in
$|l_0\omega_Y(S)| $ and by induction on $\dim Y.$\\[.2cm]

We assume $l_0=1.$  For $f_*\omega^\nu_{X/Y}$ we take $n\nu+1$ smooth curves $C_1,\cdots C_{n\nu+1}$
from $|\omega_Y(S)|$ in the generic position, and let
$$D_\nu=\sum_{i=1}^{n\nu+1}C_i.$$

Consider the exact sequence

$$0\to H^0(Y,f_*\omega^\nu_{X/Y}(-D_\nu))\to H^0(Y,f_*\omega^\nu_{X/Y})\to H^0(D_\nu, f_*\omega^\nu_{X/Y}|_{D_\nu})\to\cdots$$

Then  one has the vanishing

$$H^0(Y,f_*\omega^\nu_{X/Y}(-D_\nu))=0,$$

for otherwise there would there exists an invertible subsheaf

$$\mathcal O_Y(D_\nu)\to f_*\omega^\nu_{X/Y}.$$

But it contradicts to

$$(n\nu+1)\omega_Y(S)\cdot\omega_Y(S)=\omega_Y(S)\cdot D_\nu \leq \nu \cdot n\cdot \rho \cdot \omega_Y(S)\cdot\omega_Y(S).$$

Hence one has

$$ h^0(Y,  f_*\omega^\nu_{X/Y})\leq h^0(D_\nu, f_*\omega^\nu_{X/Y}|_{D_\nu})\leq \sum_{i=1}^{n\nu+1}h^0(C_i, f_*\omega^\nu_{X/Y}|_{C_i}).$$

Note that

$$f_*\omega^\nu_{X/Y}|_{C_i}=f_*\omega^\nu_{X_{C_i}/C_i}$$

for the subfamily  $f: X_{C_i}\to  C_i.$

Since now all $C_i$ are curves with fixed genus, the vanishing for $H^1(C_i, f_*\omega^\nu_{X_{C_i}/C_i})$
in \ref{onedimbase} still holds true for $\nu>>1.$  Hence,  as in \ref{onedimbase}  we have

$$h^0(f_*\omega^\nu_{X_{C_i}/C_i})\leq h^0(F,\omega^\nu_F)\cdot\frac{n}{2}\cdot\nu\cdot\deg\Omega^1_{C_i}(S)=
h^0(F,\omega^\nu_F)\cdot\frac{n}{2}\cdot\nu\cdot 2\cdot\omega_Y(S)\omega_Y(S),$$
and

$$h^0(X,\omega^\nu_{X/Y})=h^0(Y, f_*\omega^\nu_{X/Y})\leq (n\nu+1)h^0(F,\omega^\nu_F)\cdot \frac{n}{2}\cdot\nu\cdot
2\cdot\omega_Y(S)\cdot\omega_Y(S).$$

Dividing the last inequality by $\nu^{n+2}$ and taking the limit  for $\nu\to\infty$ we finish the proof.
\end{proof}

As an interesting application,  one can get an inequality between $c_1$ and $c_3$ on the total space of a smooth family $f:X\to Y$
of minimal surfaces  of general type over a curve $Y$.
\begin{corollary}
Let $f:X\to Y$ be a non-isotrivial smooth family of  minimal surfaces of general type  over a curve $Y$ of genus $b$.
Then we have
\begin{align*}
 c_1^3(X)<18c_3(X).
\end{align*}
\end{corollary}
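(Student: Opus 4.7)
The plan is to combine inequality (\ref{EQ1.2}) from Theorem~\ref{mainresult1} in the case $n=2$, $s=0$, with the fibrewise Bogomolov--Miyaoka--Yau (BMY) inequality and the multiplicativity of the topological Euler number on a smooth fibration; the strict inequality will come from a rigidity argument that bars the extremal case of BMY from occurring in a non-isotrivial family.

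First I would specialize the set-up. Smoothness of $f$ gives $s=0$, and a short argument using the Arakelov-type inequality mentioned in the introduction rules out non-isotrivial smooth families of minimal surfaces of general type over $\BP^1$ or over an elliptic curve, so one may assume $b\ge 2$; in particular $\omega_Y$ is ample and $\omega_X$ is nef, so the volume of $\omega_X$ coincides with $K_X^3$. Plugging $n=2$, $s=0$ and $v(\omega_Y)=2b-2$ into (\ref{EQ1.2}) then yields
\[
K_X^3\;=\;v(\omega_X)\;\le\;6\,K_F^2\,(2b-2).
\]

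Next I would apply BMY to each fibre, $K_F^2\le 3\,c_2(F)$, and use multiplicativity of the Euler number on the smooth $C^\infty$ fibration $X\to Y$ to get $\chi_{\rm top}(X)=c_2(F)(2-2b)$. With the Chern-class convention of the statement (so that $c_1^3(X)=K_X^3$ and $c_3(X)=(2b-2)\,c_2(F)$), chaining these observations gives
\[
c_1^3(X)\;\le\;6\,K_F^2\,(2b-2)\;\le\;18\,c_2(F)(2b-2)\;=\;18\,c_3(X),
\]
so the non-strict version of the corollary is immediate.

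The remaining---and delicate---task is to upgrade this to a strict inequality. The plan is to exploit rigidity of the extremal case: a smooth compact surface of general type attaining equality in BMY is a ball quotient $\B^2/\Gamma$, and such a surface is infinitesimally rigid (Mostow rigidity applied to the complex hyperbolic structure, or equivalently $H^1(F,T_F)=0$ coming from Kodaira--Nakano on the K\"ahler--Einstein metric). Consequently a non-isotrivial smooth holomorphic family cannot contain a ball-quotient fibre; since $K_F^2$ and $c_2(F)$ are locally constant along a smooth family, strict BMY at one fibre is strict at every fibre, and the chain above becomes strict. I expect this rigidity step to be the main hurdle: the literature offers several versions of the statement (Calabi--Vesentini, Yau, Mostow), and one must check that the form used is strong enough to rule out a \emph{merely holomorphic} deformation of the complex structure, rather than just a metric or isometric deformation.
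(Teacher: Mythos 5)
Your proposal is correct and follows essentially the same route as the paper: the bound $v(\omega_X)\le 6\,v(\omega_F)\,(2b-2)$ from the case $n=2$, $s=0$ of the canonical class inequality, the fibrewise Miyaoka--Yau inequality, the identity $c_3(X)=c_2(F)\,c_1(Y)$ (which the paper gets from the relative cotangent sequence rather than multiplicativity of the Euler number, an equivalent computation), and rigidity of ball quotients to force strictness via non-isotriviality. Your extra step ruling out $b\le 1$ is a sensible precaution the paper leaves implicit.
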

\begin{proof}
 Lemma \ref{onedimbase}   says that
$$c_1^3(X)\leq 6c_1^2(F)c_1(Y)=12(b-1)c_1^2(F),$$
where $F$ is a  fiber.
Now Miyaoka-Yau inequality for  $F$ says $c_1^2(F)\leq 3c_2(F).$
So we obtain
$$c_1^3(X)\leq 18c_2(F)c_1(Y).$$

By using the  following  exact sequence for $f: X\to Y$,
$$0\to f^*\Omega^1_Y\to \Omega^1_X\to \Omega^1_{X/Y}\to 0,$$
to compute the Chern class, one has
$c_3(X)=c_2(F)c_1(Y)=2(b-1)c_2(F)$.
Finally we get the inequality for Chern class
$c_1^3(X)\leq 18c_3(X)$.

Suppose that   $c_1^3(X)=18c_3(X)$. Thus $F$ satisfies
$c_1^2(F)=3c_2(F)$, i.e., $F$ is a ball quotient surface. Then the
rigidity of ball quotient of dimension
 $\geq 2$ implies
the isotriviality of  $f$. It contradicts to our assumption. Therefore we get a
strict inequality $c_1^3(X)< 18c_3(X)$.
\end{proof}


\begin{thebibliography}{[KLR73]}
\bibitem[CH88]{CH88}   Cornalba, M.,   Harris, J., {\itshape Divisor classes associated to families of stable
varieties with application to the moduli space of curves}, Ann. Sci. Ec.
Norm. Sup., {\bf  21} (1988), 455-475.
\bibitem[EV92]{EV92} Esnault, H., Viehweg, E., {\itshape Lectures on Vanishing Theorems},
DMV-Seminar {\bf 20}, Birkh\"auser, Basel-Boston-Berlin (1992).
\bibitem[Li96]{Li96} Liu, K. F., {\itshape Geometric height inequalities},
 Math. Research Letters, {\bf 3} (1996), 637-702.
\bibitem[Ta95]{Ta95} Tan, S.-L., {\itshape The minimal number of singular fibers of a semistable curve over $\mathbb P^1$},
 J. Alg. Geom., {\bf 4} (1995), 591-596.
 \bibitem[Ta96]{Ta96}  Tan, S.-L., {\itshape On the base changes of penciles of curves, II},
  Math. Z., {\bf 222} (1996), 655-676.
\bibitem[V82]{V82} Viehweg, E., {\itshape Vanishing theorems}, Journ. reine angew. Math., {\bf 335} (1982), 1-8.
\bibitem[VZ01]{VZ01}Viehweg, E., Zuo, K., {\itshape On the isotriviality of families of projective manifolds over curves},
 J. Alg. Geom., {\bf 10} (2001) 781-799.
\bibitem[VZ05]{VZ05} Viehweg, E., Zuo, K.,
{\itshape Numerical bounds for semi-stable families of curves or of certain higher dimensional manifolds},
 J. Alg. Geom., {\bf 15} (2006) 771-791.
\bibitem[Vo88]{Vo88} Vojta, P., {\itshape Diophantine inequalities and Arakelov theory}. In: Lang, S., Introduction to Arakelov
theory. Springer-Verlag, Berlin-Heidelberg -New York, 1988, pp. 155-178.
\bibitem[Xi87]{Xi87} Xiao, G., {\itshape Fibred algebraic surfaces with low slope},
Math. Ann., {\bf 276} (1987), 449-466.
\bibitem[Y93]{Y93} Yau, S. T., {\itshape A splitting theorem and an algebraic geometric characterization of
locally Hermitian symmetric spaces},  Comm. Anal. Geom.,  {\bf 1 } (1993), no. 3-4, 473-486.
\bibitem[Zh07]{Zh07}Zhang, D.-Q., {\itshape Small bound for birational automorphism groups of algebraic
varieties}, (English summary) with an appendix by Yujiro Kawamata,
Math. Ann.,  {\bf 339} (2007), no. 4, 957-975.
\end{thebibliography}
\end{document}